\newtheorem{theorem}{Theorem}[section]
\newtheorem{lemma}[theorem]{Lemma}
\theoremstyle{definition}
\newtheorem{definition}[theorem]{Definition}
\newtheorem{proposition}[theorem]{Proposition}
\theoremstyle{remark}
\newtheorem{remark}[theorem]{Remark}
\numberwithin{equation}{section}
\def\EE{{\mathcal{E}}}
\def\FF{{\mathcal{F}}}
\begin{document}

\title[Criticality of Schr\"odinger forms]{Criticality and subcriticality of generalized Schr\"odinger forms with non-local perturbations}


\author[L. Li]{Liping Li}
\address{Institute of Applied Mathematics, Academy of Mathematics and Systems Science, Chinese Academy of Sciences, Beijing 100190, China.}
\curraddr{}
\email{liping\_li@amss.ac.cn}
\thanks{{Partially supported by a joint grant (No. 2015LH0043) of China Postdoctoral Science Foundation and Chinese Academy of Science, and China Postdoctoral Science Foundation (No. 2016M590145).}}

\subjclass[2010]{Primary 31C25, 31C05, 60J25.}

\date{}

\dedicatory{}

\commby{David Asher Levin}

\begin{abstract}
In this paper, we shall treat the Schr\"odinger forms with non-local perturbations. We first extend the definitions of subcriticality, criticality and supercriticality for the Schr\"odinger forms by Takeda \cite{T14-2} to the non-local cases in the context of quasi-regular Dirichlet forms. Then we prove an analytic characterization of these definitions via the bottom of the spectrum set. 
\end{abstract}

\maketitle

\section{Introduction}\label{SEC1}

A new method to define the subcriticality, criticality and supercriticality of the Schr\"odinger form was introduced by Takeda (see \cite[Definition~4.4]{T14-2}). These definitions are given by the existence of superharmonic functions and the global property (recurrence or transience) of Dirichlet forms. Furthermore, an analytic characterization of these definitions is described as follows: let $\lambda(\mu)$ be  the bottom of the spectrum of the generalized Schr\"odinger operator after a time-change transform (see \cite[(5.1)]{T14-2}), then the subcriticality, criticality and supercriticality are equivalent to $\lambda(\mu)>1$, $\lambda(\mu)=1$ and $\lambda(\mu)<1$ respectively. In this paper, we shall extend these results to the Schr\"odinger forms with non-local perturbations. 

Let $E$ be a Lusin space and $m$ a fully supported $\sigma$-finite measure on $E$. The Markov process $X$ is $m$-symmetric on $E$ associated with a quasi-regular Dirichlet form $(\EE,\FF)$ on $L^2(E,m)$. Basically, we enforce the following assumptions on $X$: 
\begin{description}
\item[(I)] $X$ is irreducible and transient;
\item[(SF)] the semigroup $(P_t)$ of $X$ is strong Feller, i.e. $P_t(\mathcal{B}_b(E))\subset C_b(E)$ for each $t>0$. 
\end{description}
Here $\mathcal{B}_b(E)$ and $C_b(E)$ stand for the families of all bounded Borel functions and all bounded continuous functions on $E$. Clearly, \textbf{(SF)} implies the following condition: 
\begin{description}
\item[(AC)] $(P_t)$ is absolutely continuous with respect to $m$, i.e. there exists a density function $p(t,x,y)$ such that ${P_t}(x,dy)=p(t,x,y)m(dy)$ for each $t>0$. 
\end{description}
Under \textbf{(AC)}, $X$ has a non-negative, jointly measurable $\alpha$-resolvent kernel $G_\alpha(x,y)$ for $\alpha\geq 0$. Note that $G_\alpha(x,y)$ is $\alpha$-excessive both in $x$ and $y$. We simply write $G(x,y)$ for $G_0(x,y)$. We further denote the L\'evy system, jumping measure, generator and lifetime of $X$ by $(N, H)$, $J$, $\mathcal{L}$ and $\zeta$. Note that {$J(dxdy)=(1/2) N(x,dy)\mu_H(dx)$}, where $\mu_H$ is the associated Revuz measure of $H$. Moreover, $\EE(u,v)=(-\mathcal{L}u,v)_m$ for $u\in \mathcal{D}(\mathcal{L})$ and $v\in \FF$.  All the terminologies above are standard, and we refer them to \cite{CF12, FOT11, MR92}. 

Consider an additive functional  (AF in abbreviation) $A$ as follows:
\begin{equation}\label{EQ1ATA}
	A_t:= A_t^\mu + \sum_{s\leq t} F(X_{s-},X_s), \quad t\geq 0, 
\end{equation}
where $A^\mu$ is the associated continuous additive functional (CAF in abbreviation) of a signed smooth measure $\mu$, and $F$ is a bounded  symmetric function (i.e. $F(x,y)=F(y,x)$) on $E\times E\setminus d$ ($d$ is the diagonal of $E\times E$). Write the natural decompositions
\[
	\mu=\mu^+- \mu^-, \quad A^\mu=A^{\mu^+}- A^{\mu^-},\quad F=F^+-F^-, 
\]
where $\mu^+,\mu^-$ are positive smooth measures, $A^{\mu^+}, A^{\mu^-}$ are their associated positive continuous additive functionals (PCAF in abbreviation), and $F^+:=F\vee 0, F^-:=-(F\wedge 0)$ are positive symmetric functions. Further denote the following AFs:
\[
A^{F^\pm}_t:= \sum_{s\leq t} F^\pm(X_{s-},X_s),\quad A^\pm_t:= A^{\mu^\pm}_t+A^{F^\pm}_t. 
\]
Clearly, $A=A^+-A^-$. 
Let
\begin{equation}\label{EQ1EMF}
	\mathrm{e}_{\mu+F}(t):=\exp(A_t).
\end{equation}
 The Feynman-Kac semigroup after the perturbation by $A$ is defined by
\begin{equation}\label{EQ1PAT}
	P^{-A}_tf(x):= \mathbf{E}_x\left( f(X_t)\mathrm{e}_{\mu+F}(t)\right),\quad t\geq 0, \ f\in \mathcal{B}_b(E). 
\end{equation}
Under appropriate conditions (see Lemma~\ref{LM21}), $(P^{-A}_t)$ is a strongly continuous semigroup on $L^2(E,m)$ {and its} associated quadratic form $(\EE^{-A},\FF^{-A})$ is a lower bounded, closed and symmetric form on $L^2(E,m)$. This quadratic form is also called a Schr\"odinger form in \cite{T14-2}. Usually, $(P_t^{-A})$ is not Markovian. Following \cite{T14-2} with some modifications,  we set a class of positive superharmonic functions of $(P_t^{-A})$ as follows:
\[
	\mathcal{H}^+:=\left\{h: 0<h<\infty, \text{ q.e.}, h\text{ is quasi-continuous and }P^{-A}_th\leq h \text{ for each }t \right\}, 
\]
where the quasi-notions are relative to $(\EE,\FF)$. If $h\in \mathcal{H}^+$, then the $h$-transformed semigroup
\[
	P^{-A,h}_tf(x):=\frac{1}{h(x)}P_t^{-A}(fh)(x)
\]
is naturally Markovian, and its associated quadratic form
\begin{equation}\label{EQ1FAH}
\begin{aligned}
 \FF^{-A,h}&:=\{u: u\cdot h\in \FF^{-A}\},\\
 \EE^{-A,h}(u,v)&:=\EE^{-A}(uh, vh),\quad u,v\in \FF^{-A,h}, 
\end{aligned}
\end{equation}
is a Dirichlet form on $L^2(E,h^2\cdot m)$. The following definitions are taken from \cite[Definition~4.4]{T14-2}. We shall prove in Proposition~\ref{PRO24} that they are still well-defined for the cases of non-local perturbations. 

\begin{definition}\label{DEF11}
The Schr\"odinger form $(\EE^{-A},\FF^{-A})$ is said to be 
\begin{itemize}
\item[(1)] \emph{subcritical} if $(\EE^{-A,h},\FF^{-A,h})$ is transient for some $h\in \mathcal{H}^+$;
\item[(2)] \emph{critical} if $(\EE^{-A,h},\FF^{-A,h})$ is recurrent for some $h\in \mathcal{H}^+$;
\item[(3)] \emph{supercritical} if $\mathcal{H}^+=\emptyset$. 
\end{itemize}
\end{definition}

{Let us state the main result of this paper. Set 
\[
	G^-:=1-\mathrm{e}^{-F^-},\quad G^+:=\left(\mathrm{e}^{F^+}-1\right)\cdot \mathrm{e}^{-F^-}
\]
and write $G:=G^+-G^-$.} Since $F$ is bounded, there exists a constant $C>0$ such that
\[
C^{-1}F^+(x,y)\leq G^+(x,y)\leq CF^+(x,y),\quad C^{-1}F^-(x,y)\leq G^-(x,y)\leq  CF^-(x,y)
\]
for any $x,y\in E\times E\setminus d$. 
{Further denote
\begin{equation}\label{EQ1RMN}
	\rho^+:= \mu^++ NG^+\cdot \mu_H, \quad \rho^-:= \mu^-+ NG^-\cdot \mu_H,\quad \rho:= \rho^+-\rho^-. 
\end{equation}
Then $(\EE^{-A},\FF^{-A})$ is identified in Lemma~\ref{LM21}. }
The bottom of the spectrum of Schr\"odinger form $(\EE^{-A}, \FF^{-A})$ (Cf. \cite[(5.1)]{T14-2}) is now replaced by 
\begin{equation}
\lambda(\mu+F):= \inf\left\{\EE^{-A}(u,u): u\in \FF^{-A}, \ \int_E u^2d\rho^+=1 \right\}. 
\end{equation}
Under some appropriate conditions, the subcriticality, criticality and supercriticality of $(\EE^{-A},\FF^{-A})$ are equivalent to $\lambda(\mu+F)>0$,  $\lambda(\mu+F)=0$ and $\lambda(\mu+F)<0$ respectively (Theorem~\ref{THM31}). Particularly, when $F\equiv 0$, $\EE^{-A}(u,v)=\EE(u,u)-\int_E u^2d\mu$ and $\rho^+=\mu^+$. Thus this is an extended result of \cite[Theorem~5.19]{T14-2}. 

\section{Subclasses of smooth measures}

Several subclasses of smooth measures and bivariate functions will appear in this paper. For the readers' convenience,  we make a brief summary here. 

Fix a right Markov process $X$. 
The classes $\mathbf{K}(X), \mathbf{K}_1(X)$ and $\mathbf{K}_\infty(X)$ are some Kato-type classes of smooth measures and defined in \cite[Definition~2.1 and 2.2]{C02}. Note that under \textbf{(SF)}, $\mathbf{K}_\infty(X)$ equals the class of Green-tight measures $\mathcal{K}_\infty$ in \cite[Definition~2.2]{T14-2} (see \cite[Lemma~4.1]{KK15}).  The classes $\mathbf{J}(X)$ and $\mathbf{J}_\infty(X)$ defined in \cite[Definition~2.4]{C02} are the counterparts of $\mathbf{K}(X)$ and $\mathbf{K}_\infty(X)$ for the bivariate functions on $E\times E\setminus d$. Precisely, a function $F$ is said to be in $\mathbf{J}(X)$ (resp. $\mathbf{J}_\infty(X)$) if $F$ is bounded and $N|F|\cdot \mu_H\in \mathbf{K}(X)$ (resp. $\mathbf{K}_\infty(X)$).  Another subclass of bivariate functions $\mathbf{A}_\infty(X)$ defined in \cite[Definition~3.4]{C02} will be used in Theorem~\ref{THM31}. 

We also take two notations from \cite{KK15}. One is the Dynkin class $\mathbf{S}^1_D(X)$, and another one is the extended Kato class {$\mathbf{S}^1_{EK}(X)$} (Cf. \cite[\S2]{KK15}). Let $\mathbf{S}_1(X)$ be the family of smooth measures in the strict sense (Cf. \cite{FOT11}). A measure $\nu\in \mathbf{S}_1(X)$ is said to be in $\mathbf{S}_D^1(X)$ if $\sup_{x\in E}G_\alpha|\nu|(x)< \infty$ for some $\alpha>0$ and in $\mathbf{S}^1_{EK}(X)$ if $\lim_{\alpha\rightarrow \infty}\sup_{x\in E} G_\alpha |\nu|(x)<1$. 

The following facts can be deduced easily or found in the given reference:
\begin{itemize}
\item[(1)] $\mathbf{K}(X), \mathbf{K}_\infty(X), \mathbf{J}(X), \mathbf{J}_\infty(X)$ and $\mathbf{S}^1_D(X)$ are all linear spaces;
\item[(2)] $\mathbf{K}_\infty(X)\subset \mathbf{K}(X) \cap \mathbf{K}_1(X)$ (Cf. \cite[Proposition~2.3]{C02});
\item[(3)] $\mathbf{A}_\infty(X)\subset \mathbf{J}_\infty(X)\subset \mathbf{J}(X)$,  (Cf. \cite[\S3.2]{C02}); 
\item[(4)] $\mathbf{K}(X)\subset \mathbf{S}^1_{EK}(X)\subset \mathbf{S}^1_D(X)$;
\item[(5)] if $Y$ is another symmetric Markov process with the $0$-order resolvent $G^Y$ such that the smooth measure of $X$ is also that of $Y$ and $G^Y$ is bounded by $K\cdot G$ with some constant $K>0$ (for example, $Y$ is the subprocess of $X$ by killing, see \cite{Y96}), then $\mathbf{K}(X)\subset \mathbf{K}(Y)$, $\mathbf{K}_\infty(X)\subset \mathbf{K}_\infty(Y)$ and $\mathbf{S}^1_D(X)\subset \mathbf{S}_D^1(Y)$.
\end{itemize}
Moreover, let $\nu$ be a positive smooth measure. For any $u\in \FF$ and $\beta \geq 0$, it holds 
\begin{equation}\label{EQ2EUN}
	\int_E u^2d\nu \leq \|G_\beta \nu\|_\infty \EE_\beta(u,u). 
\end{equation}
This fact is taken from \cite[Proposition~4.2]{CS03} and \cite{SV96}. We shall use it frequently. 

\section{Criticality and subcriticality}

{The perturbation by \eqref{EQ1ATA} may be decomposed into two steps: the first step is killing by $\mathrm{e}^{-A^-_t}$, and the second step is the perturbation by the positive part $A^+$ of \eqref{EQ1ATA}. We give more explanations about these two procedures in the following remark.

\begin{remark}\label{RM31}
We first note that $t\mapsto \mathrm{e}^{-A^-_t}$ is a decreasing multiplicative functional of $X$, which never vanishes before $\zeta$. Thus this killing transform gives the semigroup $P_t^{A^-}f=\mathbf{E}_x\left(f(X_t)\mathrm{e}^{-A^-_t}\right)$. It is still symmetric with respect to $m$ and the associated Dirichlet form is given by (Cf. \cite{Y96})
\begin{equation}\label{EQ3FAF}
\begin{aligned}
	\FF^{A^-}&=\FF\cap L^2(E,\rho^-),\\
	\EE^{A^-}(u,u)&=\EE(u,u)-\int_{E\times E\setminus d}(u(x)-u(y))^2 G^-(x,y)J(dxdy)\\ &\qquad \qquad \qquad \qquad \qquad +\int u^2d\rho^-,\qquad\qquad u\in \FF^{A^-}. 
\end{aligned}
\end{equation}
Denote the subprocess of $X$ after killing by $X^{A^-}$. We realize $X^{A^-}$ in the same sample path space as $X$ and attain a new class of probability measures $(\mathbf{P}^{A^-}_x)_{x\in E}$.  Then the L\'evy system of $X^{A^-}$ is $(N^-,H)$, where 
\begin{equation}\label{EQ3NXY}
	N^-(x,dy)=\left(1-G^-(x,y)\right)N(x,dy)=\mathrm{e}^{-F^-(x,y)}N(x,dy). 
\end{equation}
Particularly, the jumping measure of $X^{A^-}$ is $J^-(dxdy)=(1-G^-(x,y))J(dxdy)$. 

The second step is the perturbation  of $X^{A^-}$ induced by $A^+$. In fact, we have  (Cf. \cite[(62.19)]{S88})
\[
	\mathbf{E}^{A^-}_x(f(X_t)\mathrm{e}^{A^+_t})= \mathbf{E}_x(f(X_t)\mathrm{e}^{A^+_t}\mathrm{e}^{-A^-_t})= P_t^{-A}f(x). 
\]
Mimicking \cite{Y97} (see also \cite{C03, CS03, KK15}), this perturbation can be also decomposed into two parts: one is the transform induced by the supermartingale multiplicative functional 
\begin{equation}\label{EQ3LTA}
	L_t:=A^{F^+}_t- \int_0^tN^-\left(\mathrm{e}^{F^+}-1\right)(X_s)dH_s=A^{F^+}_t- \int_0^t NG^+(X_s)dH_s,
\end{equation}
and another one is the perturbation by the positive continuous additive functional 
\[
	A^{+*}_t:=A^{\mu^+}_t+  \int_0^tN^-\left(\mathrm{e}^{F^+}-1\right)(X_s)dH_s.
\]
The Revuz measure of $A^{+*}$ relative to $X^{A^-}$ is $\rho^+$, which can be easily obtained by using \cite[Theorem~4.2]{Y98}. 
\end{remark}

We say $(\EE^{-A},\FF^{-A})$ is lower bounded if there exists a constant $\alpha$ such that for any $u\in \FF^{-A}$, $\EE^{-A}_\alpha(u,u)\geq 0$. 
}

\begin{lemma}\label{LM21}
Assume that $\rho^-\in \mathbf{S}^1_D(X)$ and $\rho^+\in \mathbf{S}^1_{EK}(X^{A^-})$. Then the Feynman-Kac semigroup \eqref{EQ1PAT} is strongly continuous on $L^2(E,m)$. Furthermore, its associated quadratic form is
\begin{equation}\label{EQ2FAF}
\begin{aligned}
	\FF^{-A}&=\FF,\\
	\EE^{-A}(u,v)&= \EE(u,v)-\int_E u(x)v(x)\mu(dx) \\ 
			 &\qquad \qquad \quad -\int_E\int_E u(x)v(y)G(x,y)N(x,dy)\mu_H(dx),
\end{aligned}
\end{equation}
which is a lower bounded positivity preserving symmetric form  (\cite{MR95}) on $L^2(E,m)$. 
\end{lemma}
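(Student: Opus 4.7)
The plan is to follow the two-step decomposition indicated in Remark~\ref{RM31}: first kill $X$ by $\mathrm{e}^{-A^-_t}$, and then perturb the resulting subprocess $X^{A^-}$ by $A^+$. The two hypotheses $\rho^-\in\mathbf{S}^1_D(X)$ and $\rho^+\in\mathbf{S}^1_{EK}(X^{A^-})$ are tailored precisely to these two steps.

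For the killing step, $\rho^-\in\mathbf{S}^1_D(X)$ ensures that Ying's killing transform~\cite{Y96} applies, so that $X^{A^-}$ is $m$-symmetric with Dirichlet form $(\EE^{A^-},\FF^{A^-})$ as in~\eqref{EQ3FAF} and L\'evy system $(N^-,H)$ as in~\eqref{EQ3NXY}. For the second step, I would decompose $A^+$ via~\eqref{EQ3LTA} as $A^+_t=L_t+A^{+*}_t$, where $L$ is the purely discontinuous supermartingale multiplicative functional whose associated exponential transform multiplies the jumping kernel of $X^{A^-}$ by $\mathrm{e}^{F^+}$, and $A^{+*}$ is a PCAF of $X^{A^-}$ with Revuz measure $\rho^+$. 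Invoking the extended Kato-class perturbation theory of Kim--Kuwae~\cite{KK15} for the symmetric process $X^{A^-}$, the hypothesis $\rho^+\in\mathbf{S}^1_{EK}(X^{A^-})$ is exactly what is needed for $(P^{-A}_t)$ to be strongly continuous on $L^2(E,m)$ with associated quadratic form closed and lower bounded; positivity preservation then follows from $\mathrm{e}_{\mu+F}(t)\geq 0$ and the Markov property of $X$. The domain identification $\FF^{-A}=\FF$ can be read off from~\eqref{EQ2EUN} applied to $\rho^\pm$.

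The remaining task, which I view as the main obstacle, is to identify $\EE^{-A}$ with the explicit expression~\eqref{EQ2FAF}. I would combine the three transforms directly. After killing, the jumping kernel of the intermediate process is $\mathrm{e}^{-F^-}N(x,dy)$ with a killing piece $+\int u^2 d\rho^-$; the $L$-transform further multiplies the kernel by $\mathrm{e}^{F^+}$, giving effective kernel $\mathrm{e}^{F(x,y)}N(x,dy)$; and perturbation by $A^{+*}$ contributes $-\int u^2 d\rho^+$. Writing $\mathrm{e}^F=1+G$ and expanding, the jumping form splits as the original $\EE$-jump term plus a correction $\tfrac12\iint(u(x)-u(y))^2 G(x,y)N(x,dy)\mu_H(dx)$. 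By symmetry of $F$ and of $N(x,dy)\mu_H(dx)$, this correction equals $\int u^2\,NG\,d\mu_H-\iint u(x)u(y)G(x,y)N(x,dy)\mu_H(dx)$. The zero-order total is $\int u^2 d\rho^- -\int u^2 d\rho^+ = -\int u^2 d\mu - \int u^2\,NG\,d\mu_H$ by~\eqref{EQ1RMN}, so the two $\int u^2\,NG\,d\mu_H$ contributions cancel, leaving $\EE(u,u)-\int u^2 d\mu-\iint u(x)u(y) G(x,y)N(x,dy)\mu_H(dx)$. Polarization in $(u,v)$ then produces the stated bilinear form~\eqref{EQ2FAF}.
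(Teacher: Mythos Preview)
Your proposal is correct and follows essentially the same two-step route as the paper: kill by $A^-$ using \cite{Y96} and the Dynkin-class bound \eqref{EQ2EUN} to get $\FF^{A^-}=\FF$ and $\EE^{A^-}_1\asymp\EE_1$, then perturb $X^{A^-}$ by $A^+$ using the extended Kato hypothesis on $\rho^+$ to obtain strong continuity, closedness, and lower boundedness; your algebraic reduction of the resulting form to \eqref{EQ2FAF} via $\mathrm{e}^F=1+G$ and the cancellation of the $\int u^2\,NG\,d\mu_H$ terms is exactly the computation the paper carries out (in the paper it is phrased as passing through the intermediate expression $\EE(u,u)+\int(u(x)-u(y))^2G\,dJ-\int u^2\,d\rho$). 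One small remark: the reference that actually supplies the quadratic-form identification and strong continuity under the extended Kato condition is Ying \cite{Y97} rather than \cite{KK15}, and the paper appeals to \cite[Theorem~6.2.1]{Y97} together with \cite[Theorem~1.5]{MR95} for the positivity-preserving conclusion, though your direct argument from $\mathrm{e}_{\mu+F}(t)\geq 0$ is of course also valid at the semigroup level.
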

{
\begin{proof}
Let $(\EE^{A^-}, \FF^{A^-})$ be given by \eqref{EQ3FAF}.
Since $F$ is bounded and $\rho^-\in \mathbf{S}^1_D(X)$, it follows from \eqref{EQ2EUN} that $\FF^{A^-}=\FF$ and $\EE^{A^-}_1\asymp \EE_1$. Here and hereafter, $\EE^{A^-}_1\asymp \EE_1$ means there exists a constant $C>0$ such that for any $u\in \FF$, 
\[
	C^{-1}\EE_1(u,u)\leq \EE^{A^-}_1(u,u)\leq C\EE_1(u,u). 
\]
Applying Remark~\ref{RM31} and \cite{Y97}, we may conclude \eqref{EQ1PAT} is strongly continuous and its associated quadratic form is 
\[
\begin{aligned}
	\FF^{-A}&=\FF, \\
	\EE^{-A}(u,u)&=\EE^{A^-}(u,u)+\int (u(x)-u(y))^2\left(\mathrm{e}^{F^+}-1\right)(x,y)J^-(dxdy)\\&\qquad \qquad \qquad -\int u^2d\rho^+,\qquad\qquad u\in \FF^{-A}. 
\end{aligned}	
\]
It follows from \eqref{EQ3FAF} and \eqref{EQ3NXY} that for any $u\in \FF$, 
\[
	\EE^{-A}(u,u)= \EE(u,u)+\int (u(x)-u(y))^2G(x,y)J(dxdy)-\int u^2d\rho. 
\]
Clearly, the right side of above is equal to the right side of the second equality in \eqref{EQ2FAF}. 
Similarly, from $\rho^+\in \mathbf{S}^1_{EK}(X^{A^-})$ we may deduce that {$\mathcal{E}^{-A}_1\asymp \EE^{A^-}_\alpha$ for some large $\alpha>0$. Thus $\mathcal{E}^{-A}_\alpha \asymp \EE_1$ for such $\alpha>0$.} That indicates $(\EE^{-A},\FF^{-A})$ is a lower bounded, symmetric closed form on $L^2(E,m)$. Then the left conclusions follow from \cite[Theorem~6.2.1]{Y97} and \cite[Theorem~1.5]{MR95}. 
\end{proof}}

\begin{remark}
 A different exponent (say, the Stieltjes exponent) was treated in \cite{Y96}. {Without loss of generality, assume $F^-=0$ and $\mu^-=0$. Consider the right continuous increasing AF 
 \[
 	A^{\mu^++G^+}:=A_t^{\mu^+}+\sum_{s\leq t}G^+(X_{s-},X_s).
 \]
 Its Stieltjes exponent is denoted by}
 \[
 	(\mathrm{Exp}A^{\mu^++G^+})_t:= \mathrm{e}^{A^{\mu^++G^+, c}_t}\prod_{s\leq t}(1+\Delta A^{\mu^++G^+}_s),
 \] 
where $A^{\mu^++G^+, c}$ is the continuous part of $A^{\mu^++G^+}$ and $\Delta A^{\mu^++G^+}_s:= A^{\mu^++G^+}_s-A^{\mu^++G^+}_{s-}$.
 Clearly, we have
\[
	(\mathrm{Exp}A^{\mu^++G^+})_t= \mathrm{e}^{A^{\mu^+}_t} \prod_{s\leq t}(1+G^+(X_{s-}, X_s))=\mathrm{e}^{A^{\mu^+}_t+\sum_{s\leq t}F^+(X_{s-},X_s)}=\mathrm{e}^{A^+_t}. 
\]
This is the significance we introduce the function $G$ in \S\ref{SEC1}.  
\end{remark}

Now we assert that if $\mathcal{H}^+$ is not empty, then $(\EE^{-A,h},\FF^{-A,h})$ is associated with a right Markov process for any $h\in \mathcal{H}^+$. 

\begin{proposition}\label{PRO23}
Assume that {$\rho^-\in \mathbf{S}^1_D(X), \rho^+\in \mathbf{S}^1_{EK}(X^{A^-})$ and $\mathcal{H}^+\neq \emptyset$}. For each $h\in \mathcal{H}^+$, $(\EE^{-A,h},\FF^{-A,h})$ is a quasi-regular and irreducible symmetric Dirichlet form on $L^2(E,h^2\cdot m)$. 
\end{proposition}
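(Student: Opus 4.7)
The plan is to transfer the quasi-regularity and irreducibility from $(\EE,\FF)$ to $(\EE^{-A,h},\FF^{-A,h})$ by composing two correspondences: the identification $\FF^{-A}=\FF$ with $\EE^{-A}_\alpha\asymp \EE_1$ (for some large $\alpha>0$) obtained in the proof of Lemma~\ref{LM21}, and the multiplication map $T_h:u\mapsto uh$, which by \eqref{EQ1FAH} is a bijection of $\FF^{-A,h}$ onto $\FF^{-A}$ and, since $\|u\|^2_{L^2(h^2\cdot m)}=\|uh\|^2_{L^2(m)}$, an isometry of the ambient $L^2$-spaces.

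First I would check that $(\EE^{-A,h},\FF^{-A,h})$ is a closed symmetric Markovian form on $L^2(E,h^2\cdot m)$. Symmetry and closedness follow immediately from $T_h$ being an isometry and from the corresponding properties of $(\EE^{-A},\FF^{-A})$ established in Lemma~\ref{LM21}. The Dirichlet property reduces to the sub-Markovianity of $(P_t^{-A,h})$, which is immediate from $h\in \mathcal{H}^+$ via
\[
P_t^{-A,h}1 = \frac{1}{h}P_t^{-A}h \leq 1,
\]
together with the positivity of $P_t^{-A}$ coming from \eqref{EQ1PAT}. A routine calculation then identifies $(\EE^{-A,h},\FF^{-A,h})$ with the quadratic form of $(P_t^{-A,h})$.

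Next I would prove quasi-regularity. Because $\EE^{-A}_\alpha\asymp \EE_1$, the $\EE^{-A}$-exceptional sets and $\EE^{-A}$-nests coincide with those of $\EE$, so exceptional sets for the quasi-notions relative to $\EE$ are inherited throughout. Starting from a regular $\EE$-nest $\{K_n\}$ of compacts supplied by quasi-regularity of $(\EE,\FF)$, and using that $h$ is quasi-continuous with $0<h<\infty$ q.e., I can refine $\{K_n\}$ so that $h|_{K_n}$ is continuous and takes values in $[1/n,n]$. On such compacts $T_h$ is a bi-Lipschitz isomorphism between the restricted spaces $(\FF^{-A,h})_{K_n}$ and $(\FF^{-A})_{K_n}$, from which I conclude that $\{K_n\}$ is also an $\EE^{-A,h}$-nest in $L^2(E,h^2\cdot m)$. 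The countable separation-of-points property and the density of an appropriate family of continuous functions in $\FF^{-A,h}$ then transfer through $T_h^{-1}$ from the corresponding properties of $(\EE,\FF)$.

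Finally, for irreducibility I would argue by contraposition. Suppose $B\subset E$ is Borel and invariant for $(P_t^{-A,h})$ with $(h^2\cdot m)(B)>0$ and $(h^2\cdot m)(E\setminus B)>0$. Since $h>0$ q.e., both $B$ and $E\setminus B$ have positive $m$-measure. Using $P_t^{-A,h}f=h^{-1}P_t^{-A}(fh)$ with $f=1_{E\setminus B}$ and the strict positivity of $h$ and of $\mathrm{e}_{\mu+F}(t)$, invariance forces $\mathbf{P}_x(X_t\in E\setminus B,\,t<\zeta)=0$ for q.e.\ $x\in B$, contradicting the irreducibility assumption~\textbf{(I)} on $X$. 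The main obstacle will be the construction of the $\EE^{-A,h}$-nest, since both the underlying Hilbert space and the form change simultaneously, and controlling $h$ on each compact of the nest away from $0$ and $\infty$ is essential to match the restricted norms under $T_h$.
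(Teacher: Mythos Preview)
Your proposal is correct and follows essentially the same strategy as the paper: the Dirichlet property comes from $P^{-A}_t h\le h$, irreducibility from the strict positivity of $\mathrm{e}_{\mu+F}(t)\,h(X_t)/h(X_0)$ for $t<\zeta$, and quasi-regularity from the equivalence $\EE^{-A}_\alpha\asymp\EE_1$ together with the quasi-continuity and q.e.\ positivity of $h$. The only difference is in how the last step is executed. The paper first notes that, since $\EE$- and $\EE^{-A}$-nests (and quasi-continuous functions) coincide, $(\EE^{-A},\FF^{-A})$ inherits the quasi-regularity data of \cite[Definition~4.9]{MR95} from $(\EE,\FF)$, and then invokes \cite[Proposition~4.2]{MR95}---a ready-made $h$-transform result for positivity preserving forms---to conclude immediately. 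You instead propose to carry out that $h$-transform step by hand, refining the nest so that $h|_{K_n}$ is continuous with values in $[1/n,n]$ and pushing the quasi-regularity axioms through the isometry $T_h$. Your route is more self-contained; the paper's is shorter because it outsources the verification to the Ma--R\"ockner reference. One minor sharpening the paper makes explicit: since $\mathcal{H}^+\neq\emptyset$ forces $\EE^{-A}\ge 0$, one actually has $\EE^{-A}_1\asymp\EE_1$ (not merely $\EE^{-A}_\alpha$ for some large $\alpha$), though either version suffices here.
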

\begin{proof}
Since $P^{-A}_th\leq h$, one can easily check that $(\EE^{-A,h},\FF^{-A,h})$ is a symmetric Dirichlet form on $L^2(E,h^2\cdot m)$. We only need to prove that it is quasi-regular and irreducible. The irreducibility can be deduced from
\[
	P_t^{-A,h}f(x)=\mathbf{E}_x\left( f(X_t)\mathrm{e}_{\mu+F}(t)\frac{h(X_t)}{h(X_0)}\right)
\]
and $\mathrm{e}_{\mu+F}(t)h(X_t)/h(X_0)>0$ for any $t<\zeta$. 

For the quasi-regularity, note that $\EE^{-A}_1\asymp \EE_1$ {by Lemma~\ref{LM21}, because $\EE^{-A}$ is non-negative definite under $\mathcal{H}^+\neq \emptyset$}. Thus the $\EE$-nest (resp. $\EE$-exceptional set, $\EE$-quasi-continuous function) is also an $\EE^{-A}$-nest (resp. $\EE^{-A}$-exceptional, $\EE^{-A}$-quasi-continuous, see \cite[Definition~4.1]{MR95}) and vice versa. Since $(\EE,\FF)$ is quasi-regular, it follows that $(\EE^{-A},\FF^{-A})$ satisfies \cite[Definition~4.9~(i, ii, iii)]{MR95}. Then the quasi-regularity of $(\EE^{-A,h},\FF^{-A,h})$ follows from \cite[Proposition~4.2]{MR95}, $h>0$ q.e. and $h$ is quasi-continuous. 
\end{proof}

The following proposition concludes that the subcriticality and criticality in Definition~\ref{DEF11} are well defined. 

\begin{proposition}\label{PRO24}
Assume that $\rho^-\in \mathbf{S}^1_D(X)$ and $\rho^+\in \mathbf{S}^1_{EK}(X^{A^-})$. If for some $h\in \mathcal{H}^+$, $(\EE^{-A,h}, \FF^{-A,h})$ is recurrent (resp. transient), then for any $h\in \mathcal{H}^+$, $(\EE^{-A,h}, \FF^{-A,h})$ is recurrent (resp. transient). 
\end{proposition}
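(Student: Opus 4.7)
The plan is to show that whenever $(\mathcal{E}^{-A,h_1},\mathcal{F}^{-A,h_1})$ is recurrent, any other $h_2\in\mathcal{H}^+$ must be proportional to $h_1$. Once this ``rigidity'' is in hand, recurrence is preserved under the trivial rescaling it produces, and the transience case follows automatically from the recurrence/transience dichotomy for irreducible Dirichlet forms.

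To carry this out, first fix $h_1,h_2\in\mathcal{H}^+$ and set $k:=h_2/h_1$. Then $k$ is strictly positive, finite q.e., and quasi-continuous, since $h_1,h_2$ are and $h_1>0$ q.e. Using the identity $P_t^{-A,h_1}(u)=h_1^{-1}P_t^{-A}(uh_1)$ together with $P_t^{-A}h_2\leq h_2$, one computes directly
\begin{equation*}
P_t^{-A,h_1}(k)=\frac{1}{h_1}\,P_t^{-A}(h_2)\leq \frac{h_2}{h_1}=k \quad \text{q.e.},
\end{equation*}
so $k$ is a non-negative excessive function for the Markovian semigroup $(P_t^{-A,h_1})$ associated with $(\mathcal{E}^{-A,h_1},\mathcal{F}^{-A,h_1})$.

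Assuming next that this Dirichlet form is recurrent, Proposition~\ref{PRO23} tells us that it is also irreducible and quasi-regular, and then I would invoke the classical fact that every non-negative excessive function of an irreducible recurrent symmetric Dirichlet form is constant q.e.\ (see \cite[Theorem~4.7.1]{FOT11}, transferred to the quasi-regular setting via \cite{MR92}) to conclude $k\equiv c$ q.e.\ for some constant $c>0$. Hence $h_2=c\,h_1$, and the defining relations in \eqref{EQ1FAH} immediately yield $\mathcal{F}^{-A,h_2}=\mathcal{F}^{-A,h_1}$, $\mathcal{E}^{-A,h_2}=c^2\,\mathcal{E}^{-A,h_1}$, with reference measures $h_2^2\cdot m=c^2\,h_1^2\cdot m$. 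The two Dirichlet forms therefore coincide up to this harmless rescaling, so $(\mathcal{E}^{-A,h_2},\mathcal{F}^{-A,h_2})$ is recurrent as well.

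For the transience case I would argue by contradiction: if $(\mathcal{E}^{-A,h_1},\mathcal{F}^{-A,h_1})$ is transient while $(\mathcal{E}^{-A,h_2},\mathcal{F}^{-A,h_2})$ is recurrent, applying the previous paragraph with $h_1$ and $h_2$ swapped would make $h_1/h_2$ constant and hence $(\mathcal{E}^{-A,h_1},\mathcal{F}^{-A,h_1})$ recurrent, contradicting the assumed dichotomy. The main obstacle I anticipate is not in the proportionality argument itself but in cleanly citing the ``non-negative excessive implies constant q.e.'' step for our quasi-regular, possibly non-local symmetric Dirichlet form on a Lusin space; this however is a standard consequence of the general theory, relying only on the approximating-sequence characterization of recurrence of $\mathbf{1}$ and on irreducibility.
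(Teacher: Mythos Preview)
Your argument is correct and takes a genuinely different route from the paper. The paper argues by contradiction directly at the level of the quadratic form: assuming $(\EE^{-A,h},\FF^{-A,h})$ transient and $(\EE^{-A,\tilde h},\FF^{-A,\tilde h})$ recurrent, it extracts from transience (via \cite{G80} and \eqref{EQ2EUN}) a Hardy-type inequality $\int u^2 g\,dm\le C\,\EE^{-A}(u,u)$ valid for all $u\in\FF^{-A}$, and then feeds in the approximating sequence $v_n=u_n\tilde h\to\tilde h$ with $\EE^{-A}(v_n,v_n)\to 0$ coming from recurrence to force $\tilde h=0$, a contradiction.

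Your approach instead exploits the probabilistic rigidity of irreducible recurrent processes: $k=h_2/h_1$ is excessive for the $h_1$-transformed Markovian semigroup, hence constant. This yields the extra structural information that in the critical case $\mathcal{H}^+$ is a single ray, which the paper's proof does not give. On the other hand, the paper's argument stays entirely within the quadratic-form formalism and never needs to invoke the ``excessive $\Rightarrow$ constant'' theorem; it is self-contained once one has the inequality \eqref{EQ2EUN} and the approximating-sequence characterization of recurrence, both of which are already in use elsewhere in the paper. Your concern about transferring the excessive-function result to the quasi-regular setting is legitimate but, as you note, routine via the quasi-homeomorphism of \cite{CMR94} together with Proposition~\ref{PRO23}.
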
 
\begin{proof}
Take two functions $h, \tilde{h}\in \mathcal{H}^+$. Assume that $(\EE^{-A,h}, \FF^{-A,h})$ is transient but $(\EE^{-A,\tilde{h}}, \FF^{-A,\tilde{h}})$ is recurrent. By Proposition~\ref{PRO23} and \cite[Proposition~2.2]{G80}, there exists a strictly positive and bounded function $g$ such that $G^{-A,h}g$ is strictly positive and bounded, where $G^{-A, h}$ is the 0-order resolvent of $(\EE^{-A,h}, \FF^{-A,h})$. Clearly, $g\cdot h^2\cdot m$ is a positive smooth measure with respect to $(\EE^{-A,h}, \FF^{-A,h})$. Thus it follows from \eqref{EQ1FAH} and \eqref{EQ2EUN} that
\[
	\int_E u^2 g h^2dm \leq \|G^{-A,h}g\|_\infty \EE^{-A,h}(u,u)=\|G^{-A,h}g\|_\infty \EE^{-A}(uh,uh),\quad \forall u\in \FF^{-A,h}. 
\]
Then 
\begin{equation}\label{EQ2EUG}
\int_E u^2 gdm \leq \|G^{-A,h}g\|_\infty \EE^{-A}(u,u), \quad \forall u\in \FF^{-A}. 
\end{equation}
On the other hand, the recurrence of $(\EE^{-A,\tilde{h}}, \FF^{-A,\tilde{h}})$ implies there exists a sequence $\{u_n\}\subset \FF^{-A,\tilde{h}}$ such that $u_n\rightarrow 1$, $m$-a.e., and $\EE^{-A,\tilde{h}}(u_n,u_n)\rightarrow 0$ (Cf. \cite[Theorem~1.6.3]{FOT11}). By \eqref{EQ1FAH}, we have $v_n:= u_n\cdot \tilde{h}\in \FF^{-A}$, $v_n\rightarrow \tilde{h}$, $m$-a.e., and $\EE^{-A}(v_n,v_n)\rightarrow 0$. Then it follows from \eqref{EQ2EUG} that 
\[
\begin{aligned}
\int_E \tilde{h}^2 gdm&=\int_E \lim_{n\rightarrow \infty} v_n^2gdm \\
 &\leq  \liminf_{n\rightarrow \infty}\int_E v_n^2gdm  \\
 &\leq \liminf_{n\rightarrow\infty}  \|G^{-A,h}g\|_\infty\EE^{-A}(v_n,v_n)\\&=0.
\end{aligned}\]
Therefore, $\tilde{h}=0$, which conduces to the contradiction.
\end{proof}

\section{Existence of superharmonic functions}

In this section, we further assume $E$ is a locally compact separable metric space, $m$ is a Radon measure and $(\EE,\FF)$ is regular on $L^2(E,m)$. Since there always exists a quasi-homeomorphism between a quasi-regular Dirichlet form and another regular Dirichlet form by \cite{CMR94}, these further assumptions do not have essential effects except for the condition \textbf{(SF)}. 

The main result of this section is as follows.

\begin{theorem}\label{THM31}
Assume that $\rho^-\in \mathbf{K}(X)$. The subprocess of $X$ after the killing by $A^-$ is denoted by $X^{A^-}$. Further assume  $\mu^+\in \mathbf{K}_\infty(X^{A^-})$ and $F^+\in \mathbf{A}_\infty(X^{A^-})$.  Set
\[
	\lambda(\mu+F)=\inf\left\{\EE^{-A}(u,u): u\in \FF^{-A}, \ \int_E u^2d\rho^+=1 \right\},
\]
where $\rho^+$ is given by \eqref{EQ1RMN}. Then the Schr\"odinger form $(\EE^{-A},\FF^{-A})$ is
\begin{itemize}
\item[(1)] subcritical, if and only if $\lambda(\mu+F)>0$; 
\item[(2)] critical, if and only if $\lambda(\mu+F)=0$;
\item[(3)] supercritical, if and only if $\lambda(\mu+F)<0$. 
\end{itemize}
\end{theorem}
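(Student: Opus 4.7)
The plan is to exploit the two-stage decomposition sketched in Remark~\ref{RM31} and reduce Theorem~\ref{THM31} to Takeda's local characterization in \cite[Theorem~5.19]{T14-2}. First I would reinterpret the Feynman-Kac semigroup $P_t^{-A}$ as a purely local Schr\"odinger perturbation living on the subprocess $X^{A^-}$. Killing $X$ by $\mathrm{e}^{-A^-_t}$ produces the symmetric process $X^{A^-}$ with Dirichlet form \eqref{EQ3FAF}, and the supermartingale multiplicative transform by $L$ from \eqref{EQ3LTA} converts the remaining exponent $\mathrm{e}^{A^+_t}$ into $\mathrm{e}^{A^{+*}_t}$, i.e. the standard Feynman-Kac perturbation of $X^{A^-}$ by the PCAF $A^{+*}$ with Revuz measure $\rho^+$. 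Thus Theorem~\ref{THM31} becomes an instance of the classical analytic characterization for local smooth-measure perturbations of a transient symmetric Markov process.

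Next I would verify that the hypotheses needed for the classical theorem hold in this reduced setting. The point is to show $\rho^+ = \mu^+ + NG^+\cdot\mu_H \in \mathbf{K}_\infty(X^{A^-})$: the first summand lies in $\mathbf{K}_\infty(X^{A^-})$ by assumption, while for the second, the two-sided comparison $G^+ \asymp F^+$ combined with $F^+\in \mathbf{A}_\infty(X^{A^-})\subset \mathbf{J}_\infty(X^{A^-})$ from the summary in \S2 gives $NG^+\cdot\mu_H\in \mathbf{K}_\infty(X^{A^-})$ by the very definition of $\mathbf{J}_\infty$. The hypothesis $\rho^-\in \mathbf{K}(X)$ together with fact (5) of \S2 places us safely inside the scope of Lemma~\ref{LM21}, so $(\EE^{-A},\FF^{-A})$ is the lower bounded closed form \eqref{EQ2FAF}.

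The third step is to match the three ingredients on the two sides of the reduction: the Schr\"odinger form, the superharmonic cone $\mathcal{H}^+$, and the spectral bottom $\lambda(\mu+F)$. Lemma~\ref{LM21} already gives the form equality, and I would check that $h\in \mathcal{H}^+$ for $P_t^{-A}$ corresponds bijectively to a positive superharmonic function of the reduced local Feynman-Kac semigroup on $X^{A^-}$, using the multiplicative identity $\mathbf{E}^{A^-}_x[f(X_t)\mathrm{e}^{A^+_t}]=P_t^{-A}f(x)$ from Remark~\ref{RM31}. Under this identification the $h$-transformed Dirichlet forms coincide, so transience and recurrence transfer, giving the well-definedness of parts (1) and (2) in the reduced picture as well. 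Since $\rho^+$ appears identically on both sides of the variational definition of $\lambda$, the two bottoms of spectrum agree, and then \cite[Theorem~5.19]{T14-2}, applied to $X^{A^-}$ perturbed by $\rho^+$, concludes the proof.

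The main obstacle I anticipate is controlling the non-Markovian intermediate step represented by the supermartingale multiplicative functional $\mathrm{e}^{L_t}$. Because $L$ need not be a true martingale, transferring strong continuity, lower boundedness, and form comparability $\EE^{-A}_\alpha\asymp\EE_1$ requires the full strength of Kato-type integrability built into $F^+\in \mathbf{A}_\infty(X^{A^-})$, which is precisely stronger than $\mathbf{J}_\infty(X^{A^-})$ and enables the machinery of \cite{C02,KK15,Y97}. The critical and supercritical cases are the most delicate: for $\lambda(\mu+F)=0$ one must produce a ground state and verify that it actually belongs to $\mathcal{H}^+$ via quasi-continuity and excessiveness arguments, whereas for $\lambda(\mu+F)<0$ one must rule out the existence of any positive superharmonic function, which reduces via the decomposition to the Takeda argument applied to $(X^{A^-},\rho^+)$.
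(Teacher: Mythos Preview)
Your reduction strategy is close to the paper's, but there is a genuine gap in how you identify the base process. The supermartingale transform by $\mathrm{e}^{L_t}$ does \emph{not} leave $X^{A^-}$ fixed: applying $\mathrm{e}^{L_t}$ produces a \emph{new} $m$-symmetric Markov process, which the paper calls $Y$, whose Dirichlet form is $\EE^Y(u,u)=\EE^{A^-}(u,u)+\int (u(x)-u(y))^2(\mathrm{e}^{F^+}-1)J^-(dxdy)=\EE^{-A}(u,u)+\int u^2d\rho^+$. Only then does the remaining exponent $\mathrm{e}^{A^{+*}_t}$ act as a local PCAF perturbation, and it acts on $Y$, not on $X^{A^-}$. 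Thus Takeda's local result must be invoked for the pair $(Y,\rho^+)$, not for $(X^{A^-},\rho^+)$, and your verification of $\rho^+\in\mathbf{K}_\infty(X^{A^-})$ is an intermediate step, not the endpoint.

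This matters because the hypotheses of \cite[Theorem~5.19]{T14-2} must hold for $Y$: one needs \textbf{(SF)} for $Y$ and $\rho^+\in\mathbf{K}_\infty(Y)$. Neither is free. The paper proves \textbf{(SF)} for $Y$ in two stages via Lemma~\ref{LM35} (strong Feller survives the killing by $NG^+\cdot\mu_H$, then the pure-jump Girsanov step), and obtains $\rho^+\in\mathbf{K}_\infty(Y)$ by first showing $\rho^+\in\mathbf{K}_\infty(X^{A^-})$ as you do, and then comparing $G^Y\leq K\,G^{A^-}$ through the \emph{conditional} gauge theorem \cite[Theorem~3.10]{C02}; this last step is exactly where $F^+\in\mathbf{A}_\infty(X^{A^-})$ (rather than merely $\mathbf{J}_\infty$) is used. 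Your closing paragraph gestures at these difficulties but misdiagnoses them: the intermediate step via $\mathrm{e}^{L_t}$ \emph{is} Markovian; the obstacle is transferring $\mathbf{K}_\infty$ and \textbf{(SF)} across that step. Also note that the paper treats the subcritical case separately through gaugeability (Proposition~\ref{PRO32}) rather than through the reduction, and handles supercriticality by the direct observation that $\mathcal{H}^+\neq\emptyset$ forces $\EE^{-A}\geq 0$.
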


The proof {will be divided} into several parts. We first note that if $\lambda(\mu+F)<0$, then $\mathcal{H}^+=\emptyset$. In fact, if $\mathcal{H}^+\neq \emptyset$, then take a function $h\in \mathcal{H}^+$ and we have $\EE^{-A,h}(u,u)\geq 0$ for any $u\in \FF^{-A,h}$ by Proposition~\ref{PRO23}. From \eqref{EQ1FAH} we can deduce that $\EE^{-A}(u,u)\geq 0$ for any $u\in \FF^{-A}$. Thus $\lambda(\mu+F)\geq 0$. Therefore, $\lambda(\mu+F)<0$ implies the supercriticality of $(\EE^{-A},\FF^{-A})$. 

\subsection{Subcriticality}

The gaugeability was studied by many researchers such as \cite{C02, CS02, CS03, KK15, SS00, TU04}. It is known that the gaugeability has a very close connection with the subcriticality. The following proposition is an analogical result of \cite[\S5.1]{T14-2}. 

\begin{proposition}\label{PRO32}
Under the same assumptions as Theorem~\ref{THM31}, $\lambda(\mu+F)>0$ is equivalent to that the gauge function  $g_A(x):=\mathbf{E}^{A^-}_x \mathrm{e}_{\mu^++F^+}(\zeta)$ is bounded, where $\mathbf{E}^{A^-}_x$ is the expectation relative to $X^{A^-}$. Furthermore, if $\lambda(\mu+F)>0$, then $g_A\in \mathcal{H}^+$ and $(\EE^{-A,g_A},\FF^{-A,g_A})$ is transient, in other words, $(\EE^{-A},\FF^{-A})$ is subcritical. 
\end{proposition}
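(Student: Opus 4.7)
The plan is to leverage the two-step decomposition from Remark~\ref{RM31}: after killing by $A^-$ one passes to the subprocess $X^{A^-}$, and the remaining perturbation by $A^+$ becomes a Feynman-Kac transform of $X^{A^-}$ by the PCAF $A^{+*}$, whose Revuz measure (relative to $X^{A^-}$) is $\rho^+$. Under this viewpoint $g_A$ is exactly the gauge function of $(X^{A^-},A^{+*})$, the form $(\EE^{-A},\FF^{-A})$ of Lemma~\ref{LM21} is the Schr\"odinger form of $X^{A^-}$ perturbed by $A^{+*}$, and the normalization $\int u^2\,d\rho^+=1$ in the definition of $\lambda(\mu+F)$ is the one naturally associated with that perturbation. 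Consequently, the equivalence $\lambda(\mu+F)>0\iff g_A\text{ bounded}$ should reduce to a standard gauge-spectrum theorem.

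To make this reduction precise I would first verify $\rho^+\in\mathbf{K}_\infty(X^{A^-})$: the hypothesis $\mu^+\in\mathbf{K}_\infty(X^{A^-})$ handles the continuous part, while $F^+\in\mathbf{A}_\infty(X^{A^-})\subset\mathbf{J}_\infty(X^{A^-})$ combined with the comparability $G^+\asymp F^+$ (since $F^-$ is bounded, $\mathrm{e}^{-F^-}$ is bounded above and below) gives $NG^+\cdot\mu_H\in\mathbf{K}_\infty(X^{A^-})$; linearity of $\mathbf{K}_\infty$ then closes the argument. With this in hand, the gauge theorem for non-local Feynman-Kac perturbations, as in \cite{C02} and in the extended-Kato-class form of \cite{KK15}, yields the equivalence between boundedness of $g_A$ and positivity of the bottom of the spectrum of the $A^{+*}$-perturbation of $X^{A^-}$, which by Lemma~\ref{LM21} coincides with $\lambda(\mu+F)$.

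For the second claim I would first verify $g_A\in\mathcal{H}^+$ and then deduce transience. Strict positivity is immediate from $A^+\geq 0$, which forces $g_A\geq 1$; finiteness is the assumed boundedness. The supermedian property uses $P^{-A}_tf(x)=\mathbf{E}^{A^-}_x[f(X_t)\mathrm{e}^{A^+_t}]$ (Remark~\ref{RM31}), the additivity $A^+_\zeta=A^+_t+A^+_{\zeta-t}\circ\theta_t$ on $\{t<\zeta\}$, and the Markov property of $X^{A^-}$, actually producing the sharper identity
\[
P^{-A}_tg_A(x)=g_A(x)-\mathbf{E}^{A^-}_x\!\left[\mathrm{e}^{A^+_\zeta}\,1_{\{\zeta\leq t\}}\right].
\]
Quasi-continuity of $g_A$ follows from its being a bounded excessive function of $(P^{-A}_t)$, transferred between the quasi-notions of $(\EE^{-A},\FF^{-A})$ and $(\EE,\FF)$ via $\EE^{-A}_\alpha\asymp\EE_1$ from Lemma~\ref{LM21}. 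Transience of $(\EE^{-A,g_A},\FF^{-A,g_A})$ I would then obtain via \cite[Proposition~2.2]{G80}: the $h$-transform identity $G^{-A,g_A}f=G^{-A}(f\,g_A)/g_A$ reduces the task to exhibiting a bounded strictly positive $f$ such that $G^{-A}(f\,g_A)$ is bounded.

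The hard part will be producing this witness, equivalently realizing $g_A$ as a Green potential of the perturbed form. A Meyer-type multiplicative decomposition of $\mathrm{e}^{A^+_t}$ combined with the Revuz formula for weighted AFs of $X^{A^-}$ should give $g_A=1+G^{-A}(\rho^+)$, whence $G^{-A}(\rho^+)$ is bounded; a gaugeability/capacity argument then promotes this to the boundedness of $G^{-A}$ on a suitable class of positive bounded functions, supplying the witness. The careful handling of this Revuz-type identity for weighted AFs in the non-local setting, leaning on the Dynkin-class bounds coming from $\rho^-\in\mathbf{K}(X)$ and $\rho^+\in\mathbf{K}_\infty(X^{A^-})$, is where the real technical work lies.
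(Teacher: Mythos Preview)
Your reduction of the equivalence $\lambda(\mu+F)>0\iff g_A$ bounded to the gauge--spectrum theorem of \cite{KK15}, and your verification that $g_A\in\mathcal{H}^+$, match the paper's proof essentially step for step (the paper cites \cite[Theorem~2.6]{CS03} for quasi-continuity and \cite[Lemma~5.2]{T14-2} for the supermedian property, but the content is the same).

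The transience argument, however, diverges from the paper and contains a real gap. Even granting the identity $g_A=1+G^{-A}\rho^+$, boundedness of $G^{-A}\rho^+$ does not by itself produce a strictly positive bounded \emph{function} $f$ with $G^{-A}(fg_A)$ bounded: $\rho^+$ need not have full support nor be absolutely continuous. Your phrase ``a gaugeability/capacity argument then promotes this\dots'' is where the missing idea hides; the natural fix would be the comparison $G^{-A}(x,y)\leq C\,G^{A^-}(x,y)$ coming from conditional gaugeability (\cite[Theorem~3.10]{C02}), after which any transience witness for $X^{A^-}$ transfers, but you do not invoke this.

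The paper sidesteps all of this with a purely form-theoretic argument. Set $\EE^Y(u,u):=\EE^{-A}(u,u)+\int u^2\,d\rho^+$; since $G^+$ is bounded one has $\EE^Y\asymp\EE^{A^-}$, so $(\EE^Y,\FF)$ is a transient Dirichlet form and by \cite{G80} admits a strictly positive bounded $m$-integrable $g$ with $\int u^2g\,dm\leq \EE^Y(u,u)$. The spectral gap gives $\EE^Y(u,u)\geq(1+\lambda)\int u^2\,d\rho^+$, hence $\EE^{-A}(u,u)\geq\frac{\lambda}{1+\lambda}\EE^Y(u,u)\geq\frac{\lambda}{1+\lambda}\int u^2g\,dm$. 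Applying this to $u\,g_A$ and Cauchy--Schwarz yields $\EE^{-A,g_A}(u,u)\geq\bigl(\int|u|\tilde g\,g_A^2\,dm\bigr)^2$ for a suitable $\tilde g\in L^1(g_A^2 m)$, which is exactly the transience criterion. This route avoids Green-kernel identities, Meyer decompositions, and any appeal to conditional gaugeability.
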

\begin{proof}
Clearly, $\rho^-\in \mathbf{S}^1_D(X)$ and $\rho^+\in \mathbf{S}^1_{EK}(X^{A^-})$. From the proof of Lemma~\ref{LM21}, we know that for any $u\in \FF$,
\[
	\EE^{-A}(u,u)=\EE^{A^-}(u,u)-\int u^2d\rho^+ +\int_{E\times E\setminus d}\left(u(x)-u(y)\right)^2G^+(x,y)J(dxdy).
\]
It follows from \cite[Theorem~1.1]{KK15} that $\lambda(\mu+F)>0$ is equivalent to that $g_A$ is bounded. Clearly, $g_A(x)\geq 1$. The quasi-continuity of $g_A$ follows from \cite[Theorem~2.6]{CS03}. The fact $P^{-A}g_A\leq g_A$ can be deduced through the same method as \cite[Lemma~5.2]{T14-2}. Therefore, $g_A\in \mathcal{H}^+$. 

Finally, we take to prove $(\EE^{-A,g_A},\FF^{-A,g_A})$ is transient. Denote
\[
	\EE^Y(u,u):=\EE^{-A}(u,u)+\int u^2d\rho^+. 
\]
Since $G^+$ is bounded, we may deduce that $\EE^{A-}\asymp \EE^Y$ and thus $(\EE^Y, \FF)$ is a transient Dirichlet form. By \cite[Proposition]{G80} and \eqref{EQ2EUN}, we may take a strictly positive, bounded and $m$-integrable function $g$ such that 
\[
	\int u^2gdm\leq \EE^Y(u,u),\quad \forall u\in \FF. 
\]
Since $\EE^Y(u,u)\geq (1+\lambda(\mu+F))\int u^2d\rho^+$, we have
\[
	\EE^{-A}(u,u)=\EE^Y(u,u)-\int u^2d\rho^+\geq \frac{\lambda(\mu+F)}{1+\lambda(\mu+F)}\EE^Y(u,u). 
\]
Hence for any $u\in \mathcal{F}^{-A,g_A}$, 
\[
\EE^{-A,g_A}(u,u)=\EE^{-A}(ug_A,ug_A)\geq \frac{\lambda(\mu+F)}{1+\lambda(\mu+F)} \int u^2g_A^2gdm\geq \left(\int |u|\tilde{g}d\tilde{m}\right)^2, 
\]
where $d\tilde{m}:=g_A^2dm$, $\tilde{g}:=\sqrt{\lambda}\cdot g/ \sqrt{(1+\lambda)\cdot \int gd\tilde{m}}$ and $\lambda:=\lambda(\mu+F)$. Clearly, $\tilde{g}$ is $\tilde{m}$-integrable. Therefore, $(\EE^{-A,g_A},\FF^{-A,g_A})$ is transient.
\end{proof}	
\begin{remark}
The assumptions in the above proposition could be much weaker.  
For example, when $\rho^-\in \mathbf{S}_D^1(X)$, $\mu^+\in \mathbf{K}_1(X^{A^-})$ and $F^+\in \mathbf{J}_\infty(X^{A^-})$, the above proof is still valid. 
\end{remark}

\subsection{Criticality}

Finally, we only need to prove the following proposition. 

\begin{proposition}\label{PRO34}
Under the same assumptions as Theorem~\ref{THM31}, $\lambda(\mu+F)=0$ implies the criticality of $(\EE^{-A},\FF^{-A})$. 
\end{proposition}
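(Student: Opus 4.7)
The plan is to construct a ground state $h$ from a minimizing sequence for $\lambda(\mu+F)$, verify $h\in\mathcal{H}^+$, and derive recurrence of $(\EE^{-A,h},\FF^{-A,h})$ from the identity $\EE^{-A,h}(1,1) = \EE^{-A}(h,h) = 0$. To this end I introduce the non-negative closed form
\[
\EE^Y(u,u) := \EE^{-A}(u,u) + \int_E u^2\,d\rho^+ = \EE^{A^-}(u,u) + \int_{E\times E\setminus d}(u(x)-u(y))^2 G^+(x,y)\,J(dxdy)
\]
on $\FF^Y = \FF$. Boundedness of $G^+$ together with the estimate $\EE^{A^-}_1\asymp\EE_1$ from the proof of Lemma~\ref{LM21} gives $\EE^Y_1\asymp\EE_1$, so $(\EE^Y,\FF)$ is a transient Dirichlet form with $\FF^Y_e = \FF_e$. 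Choose a minimizing sequence $\{u_n\}\subset\FF^{-A}$ with $\int_E u_n^2\,d\rho^+ = 1$ and $\EE^{-A}(u_n,u_n)\to 0$, so $\EE^Y(u_n,u_n)\to 1$. The Green-tightness of $\rho^+$ relative to $X^{A^-}$, inherited from $\mu^+\in\mathbf{K}_\infty(X^{A^-})$ and $F^+\in\mathbf{A}_\infty(X^{A^-})$, yields the compactness of $(\FF^Y_e,\EE^Y)\hookrightarrow L^2(E,\rho^+)$ along the lines of \cite{CS03,KK15}. After passing to a subsequence, $u_n\rightharpoonup h$ weakly in $\FF^Y_e$ and $u_n\to h$ strongly in $L^2(\rho^+)$, so $\int_E h^2\,d\rho^+ = 1$ and $h\not\equiv 0$.

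Weak lower semicontinuity of $\EE^Y$ combined with the strong $L^2(\rho^+)$ convergence forces
\[
0 \leq \EE^{-A}(h,h) = \EE^Y(h,h) - \int_E h^2\,d\rho^+ \leq \liminf_n\EE^Y(u_n,u_n) - 1 = 0,
\]
whence $\EE^{-A}(h,h) = 0$. A standard first variation then yields $\EE^{-A}(h,v)=0$ for every $v\in\FF^{-A}$, i.e.\ $h$ is $\EE^{-A}$-harmonic, so $P^{-A}_t h = h$ for every $t\ge 0$. Under \textbf{(I)} and \textbf{(SF)} the Feynman--Kac semigroup $P^{-A}_t$ is positivity improving, so a Perron--Frobenius argument (after replacing $h$ by $|h|$ via a Stollmann--Voigt type inequality) lets us take $h>0$ q.e., and quasi-continuity is inherited from $h\in\FF_e$. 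Hence $h\in\mathcal{H}^+$.

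Recurrence of $(\EE^{-A,h},\FF^{-A,h})$ then follows immediately: since $1\cdot h = h\in\FF^{-A}_e$, the constant function $1$ lies in $\FF^{-A,h}_e$, and by \eqref{EQ1FAH}
\[
\EE^{-A,h}(1,1) = \EE^{-A}(h,h) = 0.
\]
By Proposition~\ref{PRO23} and \cite[Theorem~1.6.3]{FOT11}, this is exactly the criterion for recurrence of $(\EE^{-A,h},\FF^{-A,h})$ on $L^2(E,h^2\cdot m)$.

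The main obstacles are twofold. First, the compact embedding $(\FF^Y_e,\EE^Y)\hookrightarrow L^2(E,\rho^+)$ has to be established for the composite measure $\rho^+ = \mu^+ + NG^+\cdot\mu_H$ relative to the killed process $X^{A^-}$, by adapting the Green-tightness compactness arguments of \cite{C02,CS03,KK15} to accommodate both the local and non-local components of the perturbation. Second, justifying positivity of the ground state $h$ is delicate here because $G = G^+ - G^-$ is signed, so the naive pointwise comparison does not immediately give a Stollmann--Voigt inequality $\EE^{-A}(|u|,|u|)\le\EE^{-A}(u,u)$; one must rely on the positivity-preserving structure of $(\EE^{-A},\FF^{-A})$ recorded in Lemma~\ref{LM21} and the strict positivity of the heat kernel implied by \textbf{(SF)} and irreducibility.
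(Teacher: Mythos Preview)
Your route is genuinely different from the paper's. The paper does \emph{not} build the ground state directly; instead it observes that $(\EE^Y,\FF)$ with $\EE^Y(u,u)=\EE^{-A}(u,u)+\int u^2\,d\rho^+$ is itself a regular Dirichlet form, associated with a Hunt process $Y$ obtained from $X^{A^-}$ by the supermartingale transform \eqref{EQ3LTA}. The condition $\lambda(\mu+F)=0$ becomes $\inf\{\EE^Y(u,u):\int u^2\,d\rho^+=1\}=1$, which is exactly Takeda's \emph{local} criticality criterion for the Schr\"odinger form $\EE^Y-\rho^+$. The bulk of the paper's proof is therefore checking that $Y$ satisfies \textbf{(I)} and \textbf{(SF)} (via Lemma~\ref{LM35}) and that $\rho^+\in\mathbf{K}_\infty(Y)$ (via the conditional gauge theorem of \cite{C02}, which is where the hypothesis $F^+\in\mathbf{A}_\infty(X^{A^-})$ enters). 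Once those are in place, \cite[Theorem~2.1]{T14} and \cite[\S5.2]{T14-2} are quoted verbatim. In other words, the paper reduces the non-local problem to the already-solved local one by absorbing $F^+$ into the reference process.

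Your variational construction is essentially a reproof of \cite{T14,T14-2} in the non-local setting, and the two obstacles you flag are precisely the substantive content of those papers. The compact embedding is the easier of the two: since $\EE^Y\asymp\EE^{A^-}$ and $\rho^+=\mu^++NG^+\cdot\mu_H\in\mathbf{K}_\infty(X^{A^-})$ under the stated hypotheses, it follows from the standard argument for Green-tight measures. The positivity of the minimizer is harder, and there is a further gap you do not flag: your passage from ``$\EE^{-A}(h,v)=0$ for all $v\in\FF$'' to ``$P^{-A}_t h=h$'' is only routine when $h\in L^2(m)$, whereas your $h$ lies a priori only in $\FF^Y_e$. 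In Takeda's work this is handled by first proving the ground state is bounded and continuous via a probabilistic representation, and only then verifying invariance under the Feynman--Kac semigroup; you would have to reproduce that argument here. The payoff of your approach, if carried through, is that it would likely need only $F^+\in\mathbf{J}_\infty(X^{A^-})$ rather than $\mathbf{A}_\infty(X^{A^-})$, since you never invoke the conditional gauge theorem; the paper's reduction-to-$Y$ strategy buys brevity at the cost of the stronger hypothesis (cf.\ the paper's own closing remark).
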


We first need the following lemma.

\begin{lemma}\label{LM35}
If $\rho^-\in \mathbf{K}(X)$, then $X^{A^-}$ satisfies \textbf{(SF)}.
\end{lemma}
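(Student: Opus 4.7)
To prove $P^{A^-}_t$ is strong Feller, I would first derive a perturbation identity relating $P^{A^-}_t$ to $P_t$. Applying the Meyer formula to the decreasing multiplicative functional $e^{-A^-_t}$ (whose continuous part is driven by $-dA^{\mu^-}_s$ and whose jump at time $s$ equals $-e^{-A^-_{s-}}G^-(X_{s-},X_s)$), one obtains
\[
1-e^{-A^-_t}=\int_0^t e^{-A^-_{s-}}\,dA^{\mu^-}_s+\sum_{s\leq t}e^{-A^-_{s-}}G^-(X_{s-},X_s).
\]
Multiplying by $f(X_t)$, taking $\mathbf{E}_x$, applying the strong Markov property at time $s$ (which gives $\mathbf{E}[f(X_t)\mid\mathcal{F}_s]=P_{t-s}f(X_s)$), and using the L\'evy system of $X$ to compensate the jump sum, one arrives at
\begin{align*}
P_t f(x)-P^{A^-}_t f(x)&=\mathbf{E}_x\!\left[\int_0^t e^{-A^-_s}P_{t-s}f(X_s)\,dA^{\mu^-}_s\right]\\
&\quad+\mathbf{E}_x\!\left[\int_0^t e^{-A^-_s}\!\int_E G^-(X_s,y)P_{t-s}f(y)\,N(X_s,dy)\,dH_s\right].
\end{align*}
Since $X$ is strong Feller, $P_t f\in C_b(E)$ and $y\mapsto P_{t-s}f(y)$ is bounded continuous for each $s<t$, so the problem reduces to showing each of the two expectations on the right is continuous in $x$.

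Each such expectation is a weighted potential of a PCAF whose Revuz measure, namely $\mu^-$ or $NG^-\cdot\mu_H$, is a nonnegative component of $\rho^-\in\mathbf{K}(X)$ and therefore itself belongs to $\mathbf{K}(X)$. The standard Kato-class theory under (SF) --- Khasminskii's lemma together with the equi-continuity of $\alpha$-potentials of Kato measures and the bound \eqref{EQ2EUN} --- shows that for any bounded continuous $\psi$ on $(0,t)\times E$ and any PCAF $\Lambda$ with Revuz measure in $\mathbf{K}(X)$, the map $x\mapsto\mathbf{E}_x[\int_0^t\psi(s,X_s)\,d\Lambda_s]$ lies in $C_b(E)$.

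The main obstacle is the path-dependent weight $e^{-A^-_s}$ inside the integrands: it couples the continuity question back to the killed process $X^{A^-}$ and prevents a direct appeal to the Kato-class potential lemma. I would handle this by iterating the above Meyer-formula identity on $e^{-A^-_s}$, producing a Dyson--Phillips-type expansion $P^{A^-}_t f=\sum_{n\geq 0}(-1)^n T_n f$ in which each term $T_n f$ is an $n$-fold iterated integral of $P_\bullet f$ against PCAFs with Kato-class Revuz measures and contains no factor $e^{-A^-}$. Each $T_n f$ is then continuous by induction from the Kato-class potential fact above, while Khasminskii's bound $\sup_x\mathbf{E}_x[e^{\lambda A^{\rho^-}_t}]<\infty$ for some $\lambda>0$ provides uniform-in-$x$ absolute convergence of the series. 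Combining continuity of the summands with uniform convergence yields $P^{A^-}_t f\in C_b(E)$, as required.
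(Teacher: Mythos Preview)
Your Dyson--Phillips strategy is sound in outline and would eventually succeed, but the paper takes a much shorter route. Rather than expand $P^{A^-}_t$ as a series and verify continuity term by term, the paper invokes the criterion of Chen--Kuwae \cite[Corollary~1.2]{CK09}: a multiplicative-functional perturbation of a strong Feller semigroup remains strong Feller as soon as $\lim_{t\downarrow 0}\sup_{x}\mathbf{E}_x|M^{A^-}_t-1|=0$. The whole proof then reduces to checking this single uniform limit, which the paper does by observing that $(M^{A^-}_t)^{-1}$ is the Stieltjes exponential of an increasing AF whose Revuz measure lies in $\mathbf{K}(X)$ (because $\rho^-\in\mathbf{K}(X)$ and $e^{F^-}-1\leq e^{\|F^-\|_\infty}G^-$), and then applying Khas'minskii's lemma.

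Your approach buys a self-contained argument that does not rely on \cite{CK09}, at the cost of considerably more bookkeeping: the iterated jump terms involve $P_{t-s}f(y)$ integrated against $N(X_s,dy)$ rather than a simple $\psi(s,X_s)$, so the inductive continuity step is not quite the clean ``bounded continuous $\psi$ against a Kato PCAF'' statement you wrote down, and the justification you cite (Khas'minskii, equicontinuity of $G_\alpha\nu$, and the form bound \eqref{EQ2EUN}) does not by itself yield that continuity --- you still need the $\epsilon$-splitting/Markov-property trick, which is precisely the mechanism behind the Chen--Kuwae criterion. So the two proofs share the same analytic core; the paper just packages it as a black box and applies it once.
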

\begin{proof}
Since the semigroup of $X^{A^-}$ is 
\[
	P^{A^-}_tf(x)=\mathbf{E}_x\left(f(X_t)M^{A^-}_t\right),\quad \forall f\in \mathcal{B}(E),
\]
with the multiplicative functional
\[
M^{A^-}_t:=\mathrm{e}^{-A^{\mu^-}_t}\prod_{s\leq t} \left(1-G^-(X_{s-},X_s)\right), 
\]
we only need to prove $\lim_{t\downarrow 0}\sup_{x\in E}\mathbf{E}_x\left| M^{A^-}_t-1\right|=0$ by \cite[Corollary~1.2]{CK09}. Indeed, let $G_-(x,y):=\mathrm{e}^{F^-(x,y)} -1$. Then $G_-(x,y)=G^-(x,y)\cdot \mathrm{e}^{F^-(x,y)}\leq \mathrm{e}^{\|F^-\|_\infty} G^-(x,y)$. It follows from $\rho^-\in \mathbf{K}(X)$ that $\mu^-+NG_-\cdot\mu_H\in \mathbf{K}(X)$. This implies
\[
\begin{aligned}
	\lim_{t\downarrow 0}\sup_{x\in E}\mathbf{E}_x&\left(A^{\mu^-}_t+\sum_{s\leq t}G_-(X_{s-},X_s) \right) \\&=\lim_{t\downarrow 0}\sup_{x\in E}\mathbf{E}_x\left(A^{\mu^-}_t+\int_0^t NG_-(X_s)dH_s \right)=0. 
\end{aligned}
\]
Denote the AF $A^{\mu^-+G_-}_t:= A^{\mu^-}_t+\sum_{s\leq t}G_-(X_{s-},X_s)$. By Khas'minskii's lemma (Cf. \cite[Lemma~2.1(a)]{Y97}), we have
\[
	\sup_{x\in E}\mathbf{E}_x \left(\mathrm{Exp}(A^{\mu^-+G_-})_t \right)\leq \frac{1}{1-\sup_{x\in E}\mathbf{E}_x A^{\mu^-+G_-}_t }
\]
for sufficiently small $t>0$. From this, we can deduce that
\[
\begin{aligned}
\lim_{t\downarrow 0}\sup_{x\in E}\mathbf{E}_x\left| M^{A^-}_t-1\right|&\leq \lim_{t\downarrow 0}\sup_{x\in E}\mathbf{E}_x\left| 1-\left(M^{A^-}_t\right)^{-1}\right| \\
&=\lim_{t\downarrow 0}\sup_{x\in E}\mathbf{E}_x\left| 1- \mathrm{Exp}(A^{\mu^-+G_-})_t\right| \\
&\leq \lim_{t\downarrow 0} \frac{\sup_{x\in E}\mathbf{E}_x A^{\mu^-+G_-}_t}{1-\sup_{x\in E}\mathbf{E}_x A^{\mu^-+G_-}_t }  \\
&=0.
\end{aligned}\]
That completes the proof.
\end{proof}

\begin{proof}[Proof of Proposition~\ref{PRO34}]
Note that $\EE^Y \asymp \EE^{A^-}$ and $\EE^{A^-}_1\asymp \EE_1$ by Lemma~\ref{LM21} and Proposition~\ref{PRO32}. Thus $(\EE^Y,\FF)$ is a regular Dirichlet form on $L^2(E,m)$. Denote its associated Hunt process by $Y$. Then $\lambda(\mu+F)=0$ is equivalent to 
\begin{equation}\label{EQ3LEY}
\lambda:= \inf \left\{\EE^{Y}(u,u): u\in \FF, \int_E u^2d\rho^+=1 \right\}=1.
\end{equation}
Thus we only need to prove $Y$ satisfies \textbf{(I)} and \textbf{(SF)}, and $\rho^+\in \mathbf{K}_\infty(Y)$. Then we have this proposition from \cite[Theorem~2.1]{T14} and \cite[\S5.2]{T14-2}.

The semigroup of $Y$ is
\[
\begin{aligned}
	P^Y_tf(x)&=\mathbf{E}_x\left(f(X_t)\mathrm{e}^{-A^-_t+\sum_{s\leq t}F^+(X_{s-},X_s)-\int_0^t NG^+(X_s)dH_s} \right) \\ &:=\mathbf{E}_x\left(f(X_t)M^Y_t\right). 
	\end{aligned}\]
Since $M^Y_t>0$ for $t<\zeta$, the irreducibility of $X$ implies the irreducibility of $Y$. On the other hand, $Y$ is transient by $\EE^Y\asymp \EE^{A^-}$ and the transience of $(\EE^{A^-},\FF^{A^-})$. Hence $Y$ satisfies \textbf{(I)}. 

Note that the L\'evy system of $X^{A^-}$ is $(N^-,H^-)=\left((1-G^-)\cdot N, H\right)$ {(Cf. \eqref{EQ3NXY})}, the Revuz measure of $H$ relative to $X^{A^-}$ is still $\mu_H$, and $1-G^-=\mathrm{e}^{-F^-}$ is bounded above and below since $F^-$ is bounded. The transform from $X^{A^-}$ to $Y$ may be decomposed into two parts: {the first one is killing by the measure $NG^+\cdot \mu_H$, and the second step is the perturbation induced by the AF $A^{F^+}$ (Cf. \eqref{EQ3LTA}).} 
We know that $X^{A^-}$ satisfies \textbf{(SF)} by Lemma~\ref{LM35}. Since $F^+\in \mathbf{A}_\infty(X^{A^-}) \subset \mathbf{J}(X^{A^-})$, it follows that $NG^+\cdot \mu_H\in \mathbf{K}(X^{A^-})$. {Applying Lemma~\ref{LM35} to $X^{A^-}$}, the subprocess $Z$ after the first step still satisfies \textbf{(SF)}. Particularly, the L\'evy system of $Z$ is also $(N^-,H)$. Similarly to the proof of Lemma~\ref{LM35}, one can also deduce that the perturbation in the second step remains the strong Feller property since $NG^+\cdot \mu_H\in \mathbf{K}(X^{A^-})\subset \mathbf{K}(Z)$. In other words, $Y$ satisfies \textbf{(SF)}. 

Finally, we shall prove $\rho^+\in \mathbf{K}_\infty(Y)$. Since $\mu^+\in \mathbf{K}_\infty(X^{A^-})$ and $F^+\in \mathbf{A}_\infty(X^{A^-})\subset \mathbf{J}_\infty(X^{A^-})$, it follows that $\rho^+\in \mathbf{K}_\infty(X^{A^-})$. 
On the other hand, from \eqref{EQ3LEY} we can deduce that 
\[
	\EE^Y(u,u)\geq \int_Eu^2d\rho^+ \geq \int_E u^2d(NG^+\cdot \mu_H),\quad \forall u\in \FF.
\]
This implies
\[
\inf\left\{\EE^Y(u,u): u\in \FF, \int_E u^2d(NG^+\cdot \mu_H)=1\right\}\geq 1>0.
\]
Thus $(X^{A^-}, -NG^+\cdot \mu_H+ F^+)$ is gaugeable (i.e. $\sup_{x\in E}\mathbf{E}^{A^-}_x \mathrm{e}_{-NG^+\cdot \mu_H+F^+}(\zeta)<\infty$) by \cite[Theorem~1.1]{KK15}. Denote the $0$-order resolvents of $X^{A^-}$ and $Y$ by $G^{A^-}(x,y)$ and $G^Y(x,y)$. It follows from \cite[Lemma~3.9~(1) and Theorem~3.10]{C02} that there exists a constant $K>0$ such that 
\[
	G^Y(x,y)\leq  KG^{A^-}(x,y)
\]
 for any $x,y$.  Therefore, $\mathbf{K}_\infty(X^{A^-})\subset \mathbf{K}_\infty(Y)$ and $\rho^+\in \mathbf{K}_\infty(Y)$. That completes the proof.
\end{proof}

\begin{remark}
The condition $F^+\in \mathbf{A}_\infty(X^{A^-})$  in Theorem~\ref{THM31} is only used in the above proof to guarantee the conditional gauge theorem for $(X^{A^-}, -NG^+\cdot \mu_H+F^+)$ (Cf. \cite[Theorem~3.8]{C02}). 
\end{remark}

\section*{Acknowledgement}

Part of this work was finished when the author visited Professor Masayoshi Takeda at Tohuku University in February of 2016. Many thanks to his hospitality and helpful discussions. {The author is grateful to Professor Jiangang Ying for advising him to consider this problem.} Very interesting discussions with Professor Zhi-Ming Ma are also gratefully acknowledged.

\bibliographystyle{amsplain}




\end{document}